\newtheorem{theorem}{Theorem}[section]
\newtheorem{proposition}[theorem]{Proposition}
\newtheorem{corollary}[theorem]{Corollary}
\newtheorem{remark}[theorem]{Remark}
\newtheorem{example}[theorem]{Example}
\newcommand{\Id}{\textnormal{Id}}
\newcommand{\zer}{\textnormal{zer}}
\newcommand{\gra}{\textnormal{gra}\,}
\newcommand{\sri}{\ensuremath{\text{\rm sri}\,}}
\newcommand{\spc}{\ensuremath{\overline{\operatorname{span}}\,}}
\newcommand{\cone}{\ensuremath{\text{\rm cone\,}}}
\newcommand{\emp}{\ensuremath{{\varnothing}}}
\newcommand{\inte}{\ensuremath{\text{\rm int}}\,}
\newcommand{\scal}[2]{{\left\langle{{#1}\mid{#2}}\right\rangle}}
\newcommand{\menge}[2]{\big\{{#1}~\big |~{#2}\big\}} 
\newcommand{\pinf}{\ensuremath{{+\infty}}}
\newcommand{\RR}{\ensuremath{\mathbb{R}}}
\newcommand{\RP}{\ensuremath{\left[0,+\infty\right[}}
\newcommand{\HH}{\mathcal{H}}
\newcommand{\GG}{\mathcal G}
\newcommand{\RX}{\ensuremath{\left]-\infty,+\infty\right]}}
\newcommand{\RXX}{\ensuremath{\left[-\infty,+\infty\right]}}
\newcommand{\prox}{\ensuremath{\text{\rm prox}}}
\newcommand{\infconv}{\ensuremath{\mbox{\small$\,\square\,$}}}
\newcommand{\ran}{\ensuremath{\text{\rm ran}\,}}
\newcommand{\dom}{\ensuremath{\text{\rm dom}\,}}
\numberwithin{equation}{section}
\DeclareSymbolFont{fouriersymbols}{FMS}{futm}{m}{n}
\DeclareSymbolFont{fourierlargesymbols}{FMX}{futm}{m}{n}
\DeclareMathDelimiter{\nr}{\mathord}{fouriersymbols}{152}{fourierlargesymbols}{147}
\DeclareMathDelimiter{\nr}{\mathord}{fouriersymbols}{152}{fourierlargesymbols}{147}
\title[Resolvent of the parallel composition]{Resolvent of the parallel 
	composition and 
	proximity operator of the infimal postcomposition}
\author{Luis M. Brice\~no-Arias \& Fernando Rold\'an}
\address{Departamento de Matem\'{a}tica, Universidad T\'{e}cnica Federico Santa Mar\'{i}a, Avenida Espa\~{n}a 1680, Valpara\'{i}so, Chile}
\email{luis.briceno@usm.cl, fernando.roldan@usm.cl}
\subjclass[2010]{47H05, 47H10, 65K05, 65K15, 90C25, 49M29.}
\begin{document}

\begin{abstract} In this paper we provide the resolvent 
computation of the 
parallel composition of a maximally monotone operator by a linear 
operator under mild assumptions. Connections with a 
modification of the warped 
resolvent are provided. In the context of convex optimization,
we obtain the proximity operator of the infimal postcomposition 
of a convex function by a linear operator and we extend full range 
conditions on the linear operator to mild qualification 
conditions. We also introduce a generalization of the proximity 
operator involving a general linear bounded operator leading to a 
generalization of Moreau's decomposition for composite convex 
optimization.
\par
\bigskip

\noindent \textbf{Keywords.} {\it parallel composition, infimal 
postcomposition,  
monotone operator theory, 
	proximity operators,  qualification 
	conditions.}
\end{abstract}

\maketitle

\section{Introduction}\label{sec1}
In this paper we aim at computing the resolvent of the 
\textit{parallel 
composition} of $A$ by $L$ \cite{Beck14}, defined by
\begin{equation}
L\rhd A=(LA^{-1}L^*)^{-1},
\end{equation}
where $\HH$ and $\GG$ are real Hilbert spaces, $A\colon\HH\to 
2^{\HH}$ and
$L\colon \HH\to\GG$ is linear and bounded. In the case when 
$\HH=H\oplus H$ for some real Hilbert space $H$, $\GG=H$, 
$A\colon (x,y)\mapsto Bx\times Cx$ for some set-valued 
operators 
$B$ and $C$ defined in $H$ and
$L\colon (x,y)\mapsto x+y$, we have $L\rhd A=B\infconv C$ 
\cite[Example~25.40]{1}, 
where $B\infconv C=(B^{-1}+C^{-1})^{-1}$ is the \textit{parallel 
sum} of $B$ and $C$, motivating the name of the operation.
The parallel composition appears naturally in composite 
monotone 
inclusions. Indeed, if $B\colon \GG\mapsto 2^{\GG}$, the dual 
inclusion associated to
\begin{equation}
\text{find}\quad x\in\HH\quad \text{such that }\quad 
0\in Ax +L^*BLx,
\end{equation}
is
\begin{equation}
	\text{find}\quad u\in\GG\quad \text{such that }\quad 
	0\in B^{-1}u+(-L\rhd A)^{-1}u.
\end{equation}
When $L^*L=\alpha\Id$ for some $\alpha\ge 0$ or when $L^*$ 
has full range, explicit 
formulas for the resolvent of $LA^{-1}L^*$ depending on the 
resolvent 
of $A$ 
can be found in \cite[Proposition~23.25]{1}. In 
\cite{Fuku96,Tang19} 
some variants and fixed point methods to compute the resolvent 
are 
proposed under full range condition on $L^*$ and a similar fixed 
point approach is used in \cite{Moudafi14} under the maximal 
monotonicity of $LA^{-1}L^*$. This 
computation is useful in \cite{Vanden20} 
 for the equivalence between the 
primal-dual \cite{cp,vu} and Douglas-Rachford 
splitting (DRS)
\cite{DR1956,Lions1967}  algorithms.

In the particular case when $A$ is the subdifferential of a convex 
function $f\colon\HH\to\RX$ satisfying dual qualification 
conditions, we have that $L\rhd A$ is the subdifferential of the 
\textit{infimal postcomposition} of $f$ by $L$, defined by
\begin{equation}
L\rhd f\colon \GG\to{\RXX}\colon u\mapsto 
\inf_{\substack{x\in\HH\\Lx=u}}f(x).
\end{equation}
This operation appears naturally when dealing with the dual of 
composite optimization problems since we 
have $(L\rhd f)^*=f^*\circ L^*$ under mild assumptions 
\cite[Proposition~13.24(iv)]{1}. 
Moreover, it is related {to} the 
parallel composition via the identities 
\begin{equation}
L\rhd (\partial f)=(L(\partial 
f^*)L^*)^{-1}=(\partial (f^*\circ L^*))^{-1}=\partial (f^*\circ 
L^*)^*=\partial (L\rhd f),
\end{equation}
where the second equality holds 
if, e.g., $0\in \sri(\dom f^*-\ran L^*)$ \cite[Corollary~16.53]{1}.
Therefore, {under previous assumption the resolvent of $L\rhd 
(\partial f)$ and the proximity 
operator of $L\rhd f$ coincide. Moreover, since the the 
\textit{alternating direction method of multipliers} (ADMM) for 
solving 
$\inf (f+g\circ L)$ is 
an application of DRS to the Fenchel-Rockafellar dual 
$\inf(f^*\circ (-L^*)+g^*)$ \cite{gabay83} (see also 
\cite{BrediesSun17,Cote17,condat2020proximal,patrinos2020,Siopt3}),
the computation of the proximity 
operator of $L\rhd f$ is relevant in the derivation of ADMM.
In the literature, several additional hypotheses have been 
assumed in 
order to ensure that the iterates of ADMM are well defined and 
that it 
converges. In particular, in 
\cite[Theorem~5.7]{condat2020proximal},
the operator $(\partial f+L^*L)^{-1}L^*$ is assumed to be 
single-valued, in \cite[Proposition~5.2]{patrinos2020} it is 
assumed to 
have full domain, and in \cite{BrediesSun17,Cote17} the 
strong monotonicity 
of $(\partial f+L^*L)$ is assumed  in 
order obtain full domain and single-valuedness.} It is worth to 
notice 
that some fixed 
point approaches and algorithms for computing $\prox_{f^*\circ 
L^*}$ are proposed in 
\cite{fadili09,Micc11} in the context of sparse recovery in image 
processing.

In this paper we derive a formula for the resolvent of 
the parallel composition and for the proximity operator of the 
infimal 
postcomposition in a real Hilbert space with non-standard metric 
under mild assumptions. This is obtained 
from a formula of the resolvent of $LA^{-1}L^*$ via the
 non-standard metric version of Moreau's identity in 
 \cite[Proposition~23.34(iii)]{1}. Our computation is related {to} 
 a modification of the warped resolvent defined in 
 \cite{Comb_warped} (see 
 \cite{Giss_warped} for a particular case). {We extend and 
 generalize \cite{Tang19} for parallel compositions and 
 \cite{BrediesSun17,Cote17,condat2020proximal,patrinos2020}
for infimal postcompositions related to ADMM and the 
well-posedness 
of its iterates.}
We also derive a generalization of Moreau's decomposition 
\cite{Moreau_prox} for 
composite maximally monotone operators and for composite 
convex optimization under standard assumptions by using a 
generalization of the proximity operator.

\section{Notation and preliminaries}
Throughout this paper $\HH$ and $\GG$ are real Hilbert spaces 
with 
the scalar 
product $\scal{\cdot}{\cdot}$ and associated norm $\|\cdot\|$. 
The 
identity operator on $\HH$ is denoted by $\Id$. 
Let $A:\HH \rightarrow 2^{\HH}$ be a set-valued operator. 
The domain of $A$ is 
$\dom\, A = \menge{x \in \HH}{Ax \neq  \varnothing}$, the range 
of $A$
is $\ran\, A = \menge{u \in 
\HH}{(\exists x \in \HH)\,\, u \in Ax}$, the graph of 
$A$ is  $\gra A = \menge{(x,u) \in \HH \times \HH}{u \in Ax}$,
the set of zeros of $A$ is  $\zer A = 
\menge{x \in \HH}{0 \in Ax}$, the inverse of $A$ is 
$A^{-1} \colon u \mapsto \menge{x \in \HH}{u \in Ax}$, and its
resolvent is $J_A=(\Id+A)^{-1}$. 
For every $D\subset\HH$, $A\!\mid_D$ is the restriction of $A$ to 
$D$, 
which
satisfies $\dom A\!\mid_D=\dom A\cap D$ and, for every $x\in 
D$, 
$A\!\mid_Dx=Ax$. The operator $A$ is \textit{injective on $D$} if
\begin{equation}
\label{e:inject}
(\forall x\in\HH)(\forall y\in\HH)\quad Ax\cap Ay\cap D\ne\emp
\quad \Rightarrow\quad x=y,
\end{equation}
and $A$ is injective if it is injective on $\HH$. It is clear that 
injectivity of $A$ on $D$ implies its injectivity on $D'$ when 
$D'\subset D$. Moreover, the operator $A$ is monotone if 
\begin{equation}\label{def:monotone}
(\forall (x,u) \in \gra A) (\forall (y,v) \in \gra A)\quad \scal{x-y}{u-v}
\geq 0,
\end{equation}
$A$ is strongly monotone if there exists $\alpha>0$ such that 
\begin{equation}\label{def:stmonotone}
(\forall (x,u) \in \gra A) (\forall (y,v) \in \gra A)\quad \scal{x-y}{u-v}
\geq \alpha\|x-y\|^2,
\end{equation}
and $A$ is maximally monotone if it is monotone and, for 
every $(x,u) \in \HH \times \HH$,
\begin{equation} \label{def:maxmonotone}
(x,u) \in \gra A \quad \Leftrightarrow\quad  (\forall (y,v) \in \gra A)\ 
\ \langle x-y \mid u-v \rangle \geq 0.
\end{equation}
For 
every strongly monotone self-adjoint linear bounded 
operator $U\colon\HH\to\GG$, we denote
$\scal{\cdot}{\cdot}_U=\scal{\cdot}{U\cdot}$ and 
$\|\cdot\|_U=\sqrt{\scal{\cdot}{\cdot}_U}$, which define an inner 
product and the associated norm in $\HH$, respectively.

We denote by $\Gamma_0(\HH)$ the class of proper lower 
semicontinuous convex functions $f\colon\HH\to\RX$. Let 
$f\in\Gamma_0(\HH)$.
The Fenchel conjugate of $f$ is 
defined by $f^*\colon u\mapsto \sup_{x\in\HH}(\scal{x}{u}-f(x))$,
$f^*\in \Gamma_0(\HH)$,
the subdifferential of $f$ is the maximally monotone operator
\begin{equation}
\partial f\colon x\mapsto \menge{u\in\HH}{(\forall y\in\HH)\:\: 
f(x)+\scal{y-x}{u}\le f(y)},
\end{equation}
$(\partial f)^{-1}=\partial f^*$, the set of 
minimizers of $f$ is denoted by $\arg\min_{x\in \HH}f(x)$, 
and we have  that $\zer\,(\partial f)=\arg\min_{x\in \HH}f(x)$. 
Given a strongly monotone self-adjoint linear operator 
$U\colon\HH\to\HH$, we denote by 
\begin{equation}
	\label{e:prox}
\prox^{U}_{f}\colon 
x\mapsto\arg\min_{y\in\HH}\big(f(y)+\frac{1}{2}\|x-y\|_{U}^2\big),
\end{equation}
and by $\prox_{f}=\prox^{\Id}_{f}$. We have 
\cite[Proposition~24.24]{1} (see also \cite[Section~3]{Combvu14})
\begin{equation}
\label{e:proxU}
\prox^{U}_f=U^{-\frac{1}{2}}\prox_{f\circ 
U^{-\frac{1}{2}}}U^{\frac{1}{2}}=J_{U^{-1}\partial f}
\end{equation}
and it is single-valued since the objective function in 
\eqref{e:prox} is 
strongly convex.
Moreover, it follows from 
\cite[Proposition~23.34(iii)]{1} that
\begin{equation}
\label{e:Moreauresns}
J_{UA}+UJ_{U^{-1}A^{-1}}U^{-1}=\Id,
\end{equation}
and, in the case of convex functions,
\cite[Proposition~24.24]{1} yields
\begin{equation}
\label{e:Moreau_nonsme}
\prox^{U}_{f}=
\Id-U^{-1}\,  
\prox^{U^{-1}}_{f^*}\, 
U=U^{-1}\,(\Id-  
\prox^{U^{-1}}_{f^*})\, 
U.
\end{equation}
Given a non-empty set $C\subset\HH$, we denote by $\spc C$
the closed span of $C$, by $\cone\, C$ its conical hull. 
Let $C$ be a non-empty closed convex subset of $\HH$. We 
denote 
by 
$\sri C=\menge{x\in C}{\cone(C-x)=\spc(C-x)}$ its strong 
relative interior, by
$\iota_C\in\Gamma_0(\HH)$ the indicator function of $C$, which 
takes the value $0$ in $C$ and $\pinf$ otherwise, 
by $P^U_C=\prox^U_{\iota_C}$ the projection onto $C$ with 
respect 
to $(\HH,\scal{\cdot}{\cdot}_U)$, and we denote 
$P_C=P_C^{\Id}$. 
It follows from \eqref{e:proxU} that
\begin{equation}
\label{e:PU}
P^U_C=U^{-\frac{1}{2}}\prox_{\iota_C\circ 
U^{-\frac{1}{2}}}U^{\frac{1}{2}}=
U^{-\frac{1}{2}}P_{U^{\frac{1}{2}}C}U^{\frac{1}{2}}.
\end{equation}
Given a 
linear bounded operator $L:\HH \to \GG$, we denote its adjoint 
by $L^*\colon\GG\to\HH$, its kernel (or null space) by $\ker L$, 
its 
range by 
$\ran L$, and, if $\ran L$ is closed, its \textit{Moore-Penrose 
inverse}
by 
\begin{equation}
L^{\dagger}\colon \GG\to \HH\colon y\mapsto P_{C_y}0,
\end{equation}
where $C_y=\{x\in\HH\,\mid\,
L^*Lx=L^*y\}$. If $L^*L$ is invertible, we have 
\cite[Example~3.29]{1}
\begin{equation}
\label{e:moinv}
L^{\dagger}=(L^*L)^{-1}L^*.
\end{equation}
For definitions and properties of monotone operators,
nonexpansive mappings, and convex analysis, the 
reader is referred to \cite{1}. 

We now introduce a
modification of the warped resolvent introduced in 
\cite{Comb_warped}  
(see also \cite{Giss_warped} for a particular case and 
applications). 
Let $A\colon\HH\to 2^{\HH}$ be a 
set-valued operator and let $K\colon\HH\to\HH$.
The \textit{warped resolvent of $A$ with kernel $K$} is 
defined by $J_A^K=(K+A)^{-1}K$. 
In the case when $K$ is {linear and} invertible, we have 
\begin{equation}
	\label{e:warpinvert}
	J_A^K=(K+A)^{-1}K=(K(\Id+K^{-1}A))^{-1}K=J_{K^{-1}A},
\end{equation}
which has full domain and it is single-valued if $K^{-1}A$
is maximally monotone. 
The following result characterizes 
the full domain and single-valuedness of $J_A^K$ in a general 
context.
\begin{proposition}
\label{p:domsv}
Let $A\colon\HH\to 2^{\HH}$ be a 
set-valued operator and let $K\colon\HH\to\HH$. Then the 
following 
holds.
\begin{enumerate}[label=(\roman*)]
\item
\label{p:domsvi}
$\dom J_A^K=\HH$ $\:\:\Leftrightarrow\:\:$ $\ran K\subset 
\ran(K+A)$.
\item 
\label{p:domsvii}
$J_A^K$ is {at most} single-valued $\:\:\Leftrightarrow\:\:$ $K+A$ 
is 
injective 
on 
$\ran K$.
\end{enumerate}
\end{proposition}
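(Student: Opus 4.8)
The plan is to prove both equivalences directly from the definition $J_A^K=(K+A)^{-1}K$, treating the warped resolvent as the composition of the (single-valued, everywhere-defined) operator $K$ with the (set-valued) inverse $(K+A)^{-1}$. The two claims are essentially unwinding this composition, using the elementary facts that for any set-valued $T$ one has $\dom(T^{-1})=\ran T$, and that $T^{-1}$ is at most single-valued on a set $S$ exactly when $T$ is injective on the corresponding preimage.

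For part \ref{p:domsvi}, I would argue: $x\in\dom J_A^K$ means $(K+A)^{-1}(Kx)\ne\emp$, i.e.\ $Kx\in\ran(K+A)$. Hence $\dom J_A^K=\HH$ iff $Kx\in\ran(K+A)$ for every $x\in\HH$, which is precisely $\ran K\subset\ran(K+A)$. This is a one-line chain of equivalences with no real obstacle; the only point to be careful about is that $K$ is single-valued (it maps into $\HH$, not $2^\HH$), so $Kx$ is a genuine point and the membership $Kx\in\ran(K+A)$ is unambiguous.

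For part \ref{p:domsvii}, I would show that $J_A^K$ fails to be at most single-valued iff there exists $x\in\HH$ with two distinct points $y_1\ne y_2$ in $(K+A)^{-1}(Kx)$, i.e.\ $Kx\in (K+A)y_1$ and $Kx\in(K+A)y_2$, which says $(K+A)y_1\cap(K+A)y_2\cap\ran K\ne\emp$ with $y_1\ne y_2$. By the definition \eqref{e:inject} of injectivity on a set, this is exactly the negation of ``$K+A$ is injective on $\ran K$.'' So contrapositively, $J_A^K$ is at most single-valued iff $K+A$ is injective on $\ran K$. The subtle point here — and the step I'd expect to require the most care — is that the witnessing element of $(K+A)y_1\cap(K+A)y_2$ in the definition of injectivity on $\ran K$ is guaranteed to have the form $Kx$ for some $x\in\HH$; this is automatic because that common element already lies in $\ran K$, so one simply picks any such $x$ with $Kx$ equal to it. Once that is noted, the equivalence is immediate.

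Overall this is a short, purely formal proof; no deep tool is needed beyond the definitions of resolvent, inverse operator, range, and injectivity on a set, all of which are recalled in the preliminaries. I would present it as two short paragraphs, one per item, each a chain of equivalences.
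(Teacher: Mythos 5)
Your proposal is correct and follows essentially the same route as the paper: part \ref{p:domsvi} is the identical chain of equivalences unwinding $Kx\in\ran(K+A)$, and part \ref{p:domsvii} is the paper's two-direction argument merely phrased contrapositively, with the same key observations that any witness in $\ran K$ is of the form $Kz$ and that $Kz\in\ran K$ automatically. No gaps.
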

\begin{proof}
\ref{p:domsvi}: For every $x\in\HH$ we have
\begin{align}
x\in\dom J_A^K\quad &\Leftrightarrow\quad (\exists 
u\in\HH)\quad  
u\in 
(K+A)^{-1}Kx\nonumber\\
&\Leftrightarrow\quad (\exists u\in\HH)\quad  
Kx\in(K+A)u\nonumber\\
&\Leftrightarrow\quad Kx\in\ran(K+A),
\end{align}
and the result follows.
\ref{p:domsvii}: First assume that $J_A^K$ is {at most} 
single-valued. In view 
of \eqref{e:inject}, let $x$ and 
$y$ in $\HH$ and suppose that
there exists $z\in\HH$ such that $Kz\in (Kx+Ax)\cap(Ky+Ay)$.
Then $\{x\}\cup\{y\}\subset J_A^Kz$ and single-valuedness of 
$J_A^K$
implies $x=y$, which yields the injectivity on $\ran K$. 
Conversely, let 
$z\in \dom J_A^K$ and let $x$ and $y$ in $J_A^Kz$. Then,
$Kz\in (Kx+Ax)\cap (Ky+Ay)\cap \ran K$ and injectivity on $\ran K$
implies $x=y$.
\end{proof}
In \cite[Definition~1.1]{Comb_warped} it is assumed that 
$K+A$ is injective in the whole space in order to guarantee 
that $J_A^K$ is single-valued, but this is a stronger assumption in 
general, as the following example illustrates.
\begin{example}
Let $\alpha>0$, set $\HH=\RR$, set $K\colon x\mapsto {\rm 
med} 
\{-1,x,1\}$ be the median of real values $x$, $-1$, and $1$, and 
set $A=\alpha K$. Note that 
$A$ and $K$ are maximally monotone, single-valued, and 
$\ran K=[-1,1]\subset [-1-\alpha,1+\alpha]=\ran(K+A)$. Moreover,
observe that $K+A=(1+\alpha){\rm med} 
{\{-1,\cdot,1\}}$ is injective on $\ran K$
but it is not injective on $\RR$, since $(K+A)1=(K+A)2=1+\alpha$.
\end{example}
The \textit{warped proximity operator of  $f$ with kernel $K$}
is defined by 
\begin{equation}
\label{e:proxwarp}
\prox^K_{f}=J^K_{\partial f}=(K+\partial f)^{-1}K
\end{equation}
and note that
it coincides with \eqref{e:proxU} when $K$ is strongly monotone,
self-adjoint, linear, and bounded, in view of \eqref{e:warpinvert}.
\section{Resolvent of parallel composition}
\label{sec2}
The following result is a generalization of 
\cite[Proposition~23.25]{1} and provides an explicit computation 
of the 
resolvent of $U M^* B M$ under mild assumptions.
\begin{theorem}
\label{p:rescalc}
Let $\HH$ and $\GG$ be real Hilbert spaces, let
$B\colon\HH\to 2^{\HH}$ be a maximally monotone operator, let 
$M\colon\GG\to\HH$ be a linear bounded operator such that 
 $M^*BM$ is maximally monotone in $\GG$, and let 
 $U\colon\GG\to\GG$ be a 
 $\mu-$strongly monotone self-adjoint linear operator for some 
 $\mu>0$. 
Then $U M^*BM$ is maximally 
monotone in $(\GG,\scal{\cdot}{\cdot}_{U^{-1}})$ and the 
following assertions hold:
\begin{enumerate}[label=(\roman*)]
\item 
\label{p:rescalc1}
$\ran M\subset \dom(M U M^*+B^{-1})^{-1}$ and 
\begin{equation}
J_{U M^*BM}= 
\Id-U 
M^*(M U M^*+B^{-1})^{-1} M.
\end{equation}

\item 
\label{p:warpedi}
$\ran (MUM^*)\subset \ran (MUM^*+B^{-1})$.

\item 
\label{p:warpedii}
$(MUM^*+B^{-1})\mid_{\ran M}$ is injective.

\item 
\label{p:warpediii}
Suppose that 
$\ran M$ is 
closed. Then
\begin{equation}
J_{UM^*BM}=\Id-U 
M^*J_{B^{-1}}^{MUM^*}(\sqrt{U}M^*)^{\dagger}
\sqrt{U}^{-1}.
\end{equation}

\item 
\label{p:rescalc2}
Suppose that $\ran M=\HH$. Then
\begin{align}
J_{U M^*BM}&	=\Id-U 
M^*J_{(MU M^*)^{-1} B^{-1}}(MU 
M^*)^{-1}M\label{e:fineq}\\
&=P^{U^{-1}}_{\ker M}+U M^*(MU 
M^*)^{-1}J_{MU M^*B}M.\label{e:sineq}
\end{align}
\end{enumerate}
\end{theorem}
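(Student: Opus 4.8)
The central identity is the non-standard-metric Moreau identity \eqref{e:Moreauresns}, namely $J_{UA}+UJ_{U^{-1}A^{-1}}U^{-1}=\Id$, applied with $A=M^*BM$. Since $B$ is maximally monotone and $M^*BM$ is assumed maximally monotone in $\GG$, the operator $UM^*BM$ is maximally monotone in $(\GG,\scal{\cdot}{\cdot}_{U^{-1}})$ because $U^{-1}(UM^*BM)=M^*BM$ and \eqref{e:proxU}-type reasoning (conjugation by $\sqrt{U}$) preserves maximal monotonicity; I would state this first. Then \eqref{e:Moreauresns} gives
\begin{equation}
J_{UM^*BM}=\Id-UJ_{U^{-1}(M^*BM)^{-1}}U^{-1}=\Id-UJ_{U^{-1}M^{-1}B^{-1}(M^*)^{-1}}U^{-1},
\end{equation}
so the whole theorem reduces to understanding $J_{U^{-1}(M^*BM)^{-1}}U^{-1}$, i.e. solving $U^{-1}y\in v+U^{-1}(M^*BM)^{-1}v$, equivalently $y-Uv\in U\,U^{-1}(M^*BM)^{-1}v=(M^*BM)^{-1}v$, i.e. $v\in M^*B(M(y-Uv))$ — wait, more carefully $y - Uv\in (M^*BM)^{-1}v$ means $v\in M^*BM(y-Uv)$.

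\textbf{Part \ref{p:rescalc1}.} Setting $w=J_{UM^*BM}y$, so $y-w\in Uv$ with $v\in M^*BM w$, i.e. $U^{-1}(y-w)\in M^*BMw$. Put $p=U^{-1}(y-w)$, so $w=y-Up$. Then $p\in M^*BM(y-Up)$, i.e. there is $q\in B(M(y-Up))$ with $p=M^*q$; hence $Mw=M(y-Up)=M(y-UM^*q)$ and $q\in B(Mw)$, so $Mw\in B^{-1}q$ and $Mw=My-MUM^*q$, giving $My\in (MUM^*+B^{-1})q$. This shows $My\in\ran(MUM^*+B^{-1})$, i.e. $\ran M\subset\dom(MUM^*+B^{-1})^{-1}$, and recovers $J_{UM^*BM}y=y-UM^*q$ with $q\in(MUM^*+B^{-1})^{-1}My$; one must check the right-hand side is single-valued, which follows since $J_{UM^*BM}$ is single-valued (maximal monotonicity) and $UM^*$ applied to any two valid $q$'s must agree. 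I would run these equivalences in both directions to get equality of sets, then invoke single-valuedness.

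\textbf{Parts \ref{p:warpedi}–\ref{p:warpediii}.} Part \ref{p:warpedi} is exactly Proposition~\ref{p:domsv}\ref{p:domsvi} applied to $K=MUM^*$, $A=B^{-1}$, combined with \ref{p:rescalc1} which already gives $\ran M\subset\dom(MUM^*+B^{-1})^{-1}$; but I need $\ran(MUM^*)\subset\ran(MUM^*+B^{-1})$, which I would get by noting $\ran(MUM^*)\subset\overline{\ran\,M}$ is not quite it — rather, for $z\in\ran(MUM^*)$, write $z=MUM^*x$; then test whether $z\in\dom J^{MUM^*}_{B^{-1}}$ using that $U M^*BM$ being maximally monotone forces surjectivity-type statements via \ref{p:rescalc1} applied to the right vector. (This is the step I expect to be most delicate: \ref{p:rescalc1} only gives the inclusion on $\ran M$, and bridging from $\ran M$ to $\ran(MUM^*)$ needs the structure $MUM^* = (MU^{1/2})(MU^{1/2})^*$, whose range equals $\ran(MU^{1/2})=\ran M$ since $U$ is invertible — so in fact $\ran(MUM^*)\subset\ran M$, making \ref{p:warpedi} a direct consequence of \ref{p:rescalc1} plus Proposition~\ref{p:domsv}\ref{p:domsvi}.) Part \ref{p:warpedii}: injectivity of $(MUM^*+B^{-1})\mid_{\ran M}$ follows from Proposition~\ref{p:domsv}\ref{p:domsvii} once I show $J^{MUM^*}_{B^{-1}}$ is at most single-valued on its domain, which in turn follows because $J_{UM^*BM}$ is single-valued and, along $\ran M$, the correspondence $y\mapsto q$ is forced; alternatively I would argue directly that if $MUM^*x+b=MUM^*x'+b'=My$ with $b\in B^{-1}x$, $b'\in B^{-1}x'$, then both $M(y-UM^*x)$ and $M(y-UM^*x')$ equal $Mw$ for the unique $w=J_{UM^*BM}y$, and then use monotonicity of $B^{-1}$ to conclude $x=x'$. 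For \ref{p:warpediii}, with $\ran M$ closed I rewrite $M^*=\sqrt{U}^{-1}(\sqrt{U}M^*)$ and use that on $\ran M$ one can invert $\sqrt{U}M^*$ via its Moore–Penrose inverse; the substitution $q = (\sqrt{U}M^*)^{\dagger}\sqrt{U}^{-1}\,(\cdot)$ precisely produces $J^{MUM^*}_{B^{-1}}$ evaluated correctly, giving the displayed formula. The key check is that $(\sqrt{U}M^*)^{\dagger}\sqrt{U}^{-1}$ selects, among all $q$ with $My\in(MUM^*+B^{-1})q$, one for which $UM^*q$ has the correct (unique) value — and $UM^*q$ is independent of the choice by \ref{p:warpedii}, so any selection works, in particular the Moore–Penrose one.

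\textbf{Part \ref{p:rescalc2}.} When $\ran M=\HH$, $MUM^*$ is a strongly monotone self-adjoint invertible operator on $\HH$, so by \eqref{e:warpinvert} $J^{MUM^*}_{B^{-1}}=J_{(MUM^*)^{-1}B^{-1}}$, and substituting into \ref{p:warpediii} (valid since $\ran M=\HH$ is closed, and $(\sqrt U M^*)^\dagger = ((\sqrt U M^*)^* \sqrt U M^*)^{-1}(\sqrt U M^*)^* = (M U M^*)^{-1} M \sqrt U$ by \eqref{e:moinv}) yields \eqref{e:fineq} after simplifying $UM^* (MUM^*)^{-1} M \sqrt U \cdot \sqrt U^{-1} = U M^* (M U M^*)^{-1} M$. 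For \eqref{e:sineq} I apply Moreau's identity \eqref{e:Moreauresns} again, this time to the operator $MUM^*B$ on $\HH$ with the metric given by $(MUM^*)^{-1}$: we have $J_{(MUM^*)^{-1}B^{-1}} = \Id - (MUM^*)^{-1} J_{MUM^* B}(MUM^*)$ after the appropriate identification, plug this into \eqref{e:fineq}, and the term $\Id - UM^*(MUM^*)^{-1}M$ is exactly the $U^{-1}$-orthogonal projector onto $\ker M$ (check: its range is $\ker M$ since $M(\Id - UM^*(MUM^*)^{-1}M)=0$, it fixes $\ker M$, and it is self-adjoint for $\scal{\cdot}{\cdot}_{U^{-1}}$), giving $P^{U^{-1}}_{\ker M}$; the remaining term rearranges to $UM^*(MUM^*)^{-1}J_{MUM^*B}M$. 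The only real work here is bookkeeping the metrics; the obstacle, if any, is making sure the instance of \eqref{e:Moreauresns} is applied with the correct self-adjoint strongly monotone operator, namely $MUM^*$ (for which self-adjointness and strong monotonicity on $\HH$ both hold precisely because $\ran M=\HH$ and $U$ is $\mu$-strongly monotone).
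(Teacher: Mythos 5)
Your treatment of the maximal monotonicity claim and of parts \ref{p:rescalc1}, \ref{p:warpedi}, \ref{p:warpediii} and \ref{p:rescalc2} is essentially the paper's own route: the direct characterization of $p=J_{UM^*BM}x$ via a vector $v\in(MUM^*+B^{-1})^{-1}Mx$ with $p=x-UM^*v$, the trivial inclusion $\ran(MUM^*)\subset\ran M$, the Moore--Penrose substitution (in the paper this is organized through the $U^{-1}$-orthogonal decomposition $\GG=\ker M\oplus\ran(UM^*)$ and the identity $M=MUM^*(\sqrt{U}M^*)^{\dagger}\sqrt{U}^{-1}$, which you should verify explicitly rather than assert that the substitution ``produces $J^{MUM^*}_{B^{-1}}$ evaluated correctly''), and the second application of \eqref{e:Moreauresns} with metric $(MUM^*)^{-1}$ for \eqref{e:sineq}.

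The genuine gap is in part \ref{p:warpedii}. Your primary route --- deduce it from Proposition~\ref{p:domsv}\ref{p:domsvii} by showing that $J^{MUM^*}_{B^{-1}}$ is at most single-valued --- cannot work: first, that proposition with $K=MUM^*$ concerns injectivity of $MUM^*+B^{-1}$ \emph{on} $\ran(MUM^*)$ (arguments unrestricted, common value constrained to $\ran(MUM^*)$), which is a different notion from the injectivity of the domain restriction $(MUM^*+B^{-1})\!\mid_{\ran M}$ asserted in \ref{p:warpedii}; second, and more importantly, $J^{MUM^*}_{B^{-1}}$ is in general \emph{not} single-valued under the hypotheses of the theorem (take $B^{-1}=N_C$ with $C$ the closed unit ball and $\ker M^*\neq\{0\}$: then $0$ and any $y\in(\ker M^*\smallsetminus\{0\})\cap\inte C$ both lie in $J^{MUM^*}_{B^{-1}}0$), so the statement you propose to establish is false. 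Your fallback ``alternative'' argument is also incomplete as written: you assume the common value has the form $My$, i.e.\ lies in $\ran M$, whereas \ref{p:warpedii} requires injectivity of the restriction for an \emph{arbitrary} common value $u\in\HH$. The correct (and simpler) argument, which is the paper's, bypasses $J_{UM^*BM}$ entirely: for $x,y\in\ran M$ and $u\in(MUM^*x+B^{-1}x)\cap(MUM^*y+B^{-1}y)$, monotonicity of $B^{-1}$ applied to $u-MUM^*x\in B^{-1}x$ and $u-MUM^*y\in B^{-1}y$ gives $0\le-\scal{UM^*(x-y)}{M^*(x-y)}\le-\mu\|M^*(x-y)\|^2$, hence $x-y\in\ker M^*\cap\overline{\ran M}=\{0\}$. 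Relatedly, in part \ref{p:warpediii} you invoke \ref{p:warpedii} to claim $UM^*q$ is independent of the choice of $q\in(MUM^*+B^{-1})^{-1}My$; this is misplaced since those $q$ need not lie in $\ran M$ --- the independence instead follows from \ref{p:rescalc1} together with the single-valuedness of $J_{UM^*BM}$ (or from a direct monotonicity computation as above).
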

\begin{proof} The maximal monotonicity of $U M^*BM$ 
follows from 
\cite[Proposition~20.24]{1}. 
\ref{p:rescalc1}:
For every $x$ and $p$ in $\GG$, we 
have
\begin{align}
p=J_{U M^*BM}x\quad 
&\Leftrightarrow\quad x-p\in U M^*BMp\\
&\Leftrightarrow\quad (\exists v\in \HH)\:\: 
\begin{cases}
x-p=U M^*v\\
v\in BMp
\end{cases}\nonumber\\
&\Leftrightarrow\quad (\exists v\in \HH)\:\: 
\begin{cases}
p=x-U M^*v\\
Mp\in B^{-1}v
\end{cases}\nonumber\\
&\Leftrightarrow\quad (\exists v\in \HH)\:\: 
\begin{cases}
p=x-U M^*v\\
Mx\in MU M^*v+B^{-1}v.
\end{cases}\nonumber\\
&\Leftrightarrow\quad (\exists v\in(M U 
M^*+B^{-1})^{-1}Mx)\quad  
p=x-U M^*v,
\end{align}
and the result follows.
\ref{p:warpedi}: It follows from 
\ref{p:rescalc1} that
\begin{equation}
\ran (MUM^*)\subset \ran M\subset 
\dom(MUM^*+B^{-1})^{-1}=\ran (MUM^*+B^{-1}).
\end{equation}
\ref{p:warpedii}: Let $x$ and $y$ in $\ran M$ be such that
there exists $u\in (MUM^*x+B^{-1}x)\cap(MUM^*y+B^{-1}y)$.
Then, $u-MUM^*x\in B^{-1}x$, $u-MUM^*y\in B^{-1}y$, and the 
monotonicity of $B^{-1}$ yields
\begin{align}
\label{e:auxwarped}
0&\le \scal{-MUM^*(x-y)}{x-y}\nonumber\\
&=-\scal{UM^*(x-y)}{M^*(x-y)}\nonumber\\
&\le -\mu\|M^*(x-y)\|^2,
\end{align}
which implies $x-y\in\ker M^*$. Since $x-y\in
\ran M\subset\overline{\ran M}$, it follows from 
\cite[Fact~2.25(iv)]{1} 
that $x-y\in \ker 
M^*\cap \overline{\ran M}=\{0\}$, which yields the result.

\ref{p:warpediii}: 
Denote by $\GG_U$ the Hilbert space $\GG$ endowed with 
the 
scalar product $\scal{\cdot}{\cdot}_{U^{-1}}$.
Note that $M^{*_U}=UM^*U^{-1}$, where $M^{*_U}$ and 
$M^*$ are the adjoints of $M$ in $\GG_U$ and $\GG$, 
respectively.
Moreover, \cite[Fact~2.25(iv)]{1} and the closedness of $\ran M$ 
on 
$\GG_U$ 
yield $(\ker M)^{\bot_U}=\ran M^{*_U}
=\ran (UM^*U^{-1})=\ran (UM^*)$, where $\bot_U$ stands for the 
orthogonal complement in $\GG_U$. Hence, 
\begin{equation}
\label{e:orthdecU}
	\GG_U=\ker M\oplus \ran(UM^*)
\end{equation}
is an orthogonal decomposition of $\GG_U$. Hence, we have 
from 
\cite[Proposition~24.24(ii) \& Proposition~3.30(iii)]{1} that
\begin{align}
\label{e:pkerU}
	P^{U^{-1}}_{\ker M}&=\prox_{\iota_{\ker 
	M}}^{U^{-1}}\nonumber\\
	&=\sqrt{U}\prox_{\iota_{\ker M}\circ 
		\sqrt{U}}\sqrt{U}^{-1}\nonumber\\
	&=\sqrt{U}P_{\ker (M\sqrt{U})}\sqrt{U}^{-1}\nonumber\\
	&=
	\Id-UM^*(\sqrt{U}M^*)^{\dagger}\sqrt{U}^{-1},
\end{align}
and
\begin{equation}
\label{e:pranU}
	P^{U^{-1}}_{\ran 
	(UM^*)}=UM^*(\sqrt{U}M^*)^{\dagger}\sqrt{U}^{-1},
\end{equation}
where $(\sqrt{U}M^*)^{\dagger}$ 
is the Moore-Penrose inverse of  $\sqrt{U}M^*\colon\HH\to\GG$.
Therefore, \ref{p:rescalc1} 
asserts that 
\begin{align}
	\label{e:moreaures}
	J_{UM^*BM}&=\Id-U 
	M^*(B^{-1}+M U M^*)^{-1} M\nonumber\\
	&=\Id-U 
	M^*(B^{-1}+M U M^*)^{-1} MP^{U^{-1}}_{\ran 
	(UM^*)}\nonumber\\
	&=\Id-U 
	M^*(B^{-1}+M U M^*)^{-1} 
	MUM^*(\sqrt{U}M^*)^{\dagger}\sqrt{U}^{-1}\nonumber\\
	&=\Id-U 
	M^*J_{B^{-1}}^{MUM^*}(\sqrt{U}M^*)^{\dagger}
	\sqrt{U}^{-1},
\end{align}
where in the last equality $J_{B^{-1}}^{MUM^*}$ has full domain 
in 
view of \ref{p:warpedi} and 
Proposition~\ref{p:domsv}\ref{p:domsvi}. 

\ref{p:rescalc2}: Since $\ran M=\HH$ is closed and $U$ is 
$\mu-$strongly 
monotone for some $\mu>0$,
 $MU M^*$ is strongly monotone and, thus, invertible. Indeed, 
 for every $v\in 
 \HH$, 
 \cite[Fact~2.26]{1} implies that there exists $\alpha>0$ such that
\begin{equation}
\scal{MU M^*v}{v}=\scal{U M^*v}{M^*v}\ge 
\mu\|M^*v\|^2\ge \mu\alpha^2\|v\|^2.
\end{equation}
Hence, since $(\sqrt{U}M^*)^*(\sqrt{U}M^*)=MUM^*$,
\eqref{e:fineq} follows from \ref{p:warpediii},
\eqref{e:warpinvert}, and \eqref{e:moinv}. Moreover, since 
\eqref{e:pkerU} and \eqref{e:moinv} yield
$P^{U^{-1}}_{\ker M}=\Id-UM^*(MUM^*)^{-1}M$,
 \eqref{e:sineq}  follows from \eqref{e:fineq} and
\eqref{e:Moreauresns}.
\end{proof}
\begin{remark}
\begin{enumerate}[label=(\roman*)]
\item Note that Theorem~\ref{p:rescalc}\ref{p:rescalc1}
provides the existence of zeros of the monotone operator
$MU M^*+B^{-1}$ from the maximal monotonicity of 
$M^* BM$, which is guaranteed, e.g., if  $\cone(\ran 
M-\dom B)=\spc(\ran 
M-\dom B)$ \cite[Corollary~25.6]{1} (see \cite{Bot_grad} for a 
weaker {assumption} involving the domain of the Fitzpatrick 
function).
\item 
Note that, from Theorem~\ref{p:rescalc}\ref{p:rescalc1}, 
$M^*(M U M^*+B^{-1})^{-1} M\colon \GG\to\GG$ is 
single-valued, 
even if 
$(M U M^*+B^{-1})^{-1}$
can be a set-valued mapping. Indeed,
for every $x\in\GG$,  let $v$ and $w$ in $(M U 
M^*+B^{-1})^{-1}Mx$. Then, 
$M(x-U M^*v)\in B^{-1}v$ and $M(x-U M^*w)\in 
B^{-1}w$ and the monotonicity of $B^{-1}$ yields
\begin{equation}
0\le 
\scal{-MU(M^*v-M^*w)}{v-w}=-\|M^*v-M^*w\|_{U}^2,
\end{equation}
which implies $M^*v=M^*w$. This computation is consistent with 
the 
fact that the resolvent of the monotone operator $U M^* 
BM$ is single-valued.
\item
Observe that Theorem~\ref{p:rescalc}\ref{p:warpedi}
and Proposition~\ref{p:domsv}\ref{p:domsvi} imply that
$\dom J_{B^{-1}}^{MUM^*}=\HH$. On the other hand, the 
single-valuedness 
of $J_{B^{-1}}^{MUM^*}$ is not guaranteed since 
$MUM^*+B^{-1}$
is not necessarily injective on $\ran (MUM^*)$ (see 
Proposition~\ref{p:domsv}\ref{p:domsvii}). Indeed, suppose 
that $\ker M^*\neq\{0\}$ and that $B^{-1}=N_C$, where $C$ is 
the 
closed 
ball centered at $0$
with radius $1$. By taking $x=0$ and $y\in(\ker 
M^*\smallsetminus\{0\})\cap \inte C$, we have $\{0\}=N_Cx\cap 
N_Cy=(MUM^*x+B^{-1}x)\cap (MUM^*y+B^{-1}y)$, $0\in\ran 
MUM^*$,
and $x\ne y$. {Since $0=MUM^*0$, this implies $\{x,y\}\subset 
J^{M^*UM}_{B^{-1}}0$ and, thus, $J^{M^*UM}_{B^{-1}}$ is not 
single-valued.}
However, when $\ran M$ is closed, it follows from 
Theorem~\ref{p:rescalc}\ref{p:warpedii} and $\ran 
(\sqrt{U}M^*)^{\dagger}=\ran (M\sqrt{U})=\ran M$ 
\cite[Proposition~3.30(v)]{1} that
$J_{B^{-1}}^{MUM^*}(\sqrt{U}M^*)^{\dagger}$ is single-valued.

\item 
In the particular case when $U=\Id$, 
Theorem~\ref{p:rescalc}\ref{p:rescalc2}
coincides with \cite[Proposition~23.25]{1}. On the other hand, 
when
 $M=\Id$ and $\HH=\GG$, 
we recover from Theorem~\ref{p:rescalc}\ref{p:rescalc2} the 
Moreau's 
decomposition with non-standard metric 
in \cite[Proposition~23.34(iii)]{1} recalled in \eqref{e:Moreauresns}.
\end{enumerate}
\end{remark}
We conclude this section with the computation of the resolvent of 
the parallel composition $L\rhd A$.
\begin{corollary}
\label{c:parallel}
Let $A\colon\HH\to 2^{\HH}$ be a maximally monotone operator, 
let $L\colon\HH\to\GG$ be a linear bounded operator such  that 
$LA^{-1}L^*$ is maximally monotone in $\GG$. Moreover, let 
$U\colon \GG\to\GG$ be a self-adjoint strongly monotone linear 
bounded operator.
Then, $U(L\rhd A)$ is maximally monotone in 
$(\GG,\scal{\cdot}{\cdot}_{U^{-1}})$ and the following holds:
\begin{enumerate}[label=(\roman*)]
\item 
\label{c:paralleli}
$J_{U(L\rhd A)}=L(A+L^*U^{-1}L)^{-1}L^*U^{-1}.$
\item 
\label{c:parallelii}
Suppose that $\ran L$ is closed. Then,
\begin{equation}
J_{U(L\rhd 
A)}=LJ^{L^*U^{-1}L}_{A}(\sqrt{U}^{-1}L)^{\dagger}\sqrt{U}^{-1}.
\end{equation}
\item 
\label{c:paralleliii}
Suppose that $\ran L^*=\HH$. Then,
\begin{equation}
J_{U(L\rhd 
A)}=LJ_{(L^*U^{-1}L)^{-1}A}(L^*U^{-1}L)^{-1}L^*{U}^{-1}.
\end{equation}
\end{enumerate}
\end{corollary}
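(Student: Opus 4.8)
The plan is to derive the three formulas from Theorem~\ref{p:rescalc} under the substitution $B\leftarrow A^{-1}$, $M\leftarrow L^{*}$, combined with the non-standard metric Moreau identity~\eqref{e:Moreauresns}. With these choices $M\colon\GG\to\HH$ is linear and bounded, $M^{*}=L$, $B=A^{-1}$ is maximally monotone (as the inverse of the maximally monotone operator $A$), and $M^{*}BM=LA^{-1}L^{*}$, which by hypothesis is maximally monotone in $\GG$; hence $L\rhd A=(LA^{-1}L^{*})^{-1}$ is maximally monotone in $\GG$ as well. Applying \cite[Proposition~20.24]{1} to the pair $(U,L\rhd A)$ then shows that $U(L\rhd A)$ is maximally monotone in $(\GG,\scal{\cdot}{\cdot}_{U^{-1}})$, which settles the first assertion of the statement.

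For the resolvent, I would first use~\eqref{e:Moreauresns} with the operator $U$ and the (maximally monotone) operator $L\rhd A$, together with $(L\rhd A)^{-1}=LA^{-1}L^{*}$, to obtain
\begin{equation}
\label{e:corstart}
J_{U(L\rhd A)}=\Id-U\,J_{U^{-1}LA^{-1}L^{*}}\,U^{-1}.
\end{equation}
Since $U^{-1}$ is again self-adjoint and strongly monotone, Theorem~\ref{p:rescalc} applies verbatim to the triple $(A^{-1},L^{*},U^{-1})$ in place of $(B,M,U)$. Part~\ref{p:rescalc1} gives $J_{U^{-1}LA^{-1}L^{*}}=\Id-U^{-1}L(A+L^{*}U^{-1}L)^{-1}L^{*}$; inserting this into~\eqref{e:corstart} and cancelling the factors $UU^{-1}=\Id$ yields $J_{U(L\rhd A)}=L(A+L^{*}U^{-1}L)^{-1}L^{*}U^{-1}$, which is~\ref{c:paralleli}.

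The remaining two items follow the same pattern. For~\ref{c:parallelii}, closedness of $\ran L$ entails closedness of $\ran L^{*}=\ran M$ by the closed range theorem, so Theorem~\ref{p:rescalc}\ref{p:warpediii} is available for the triple $(A^{-1},L^{*},U^{-1})$; noting $\sqrt{U^{-1}}=\sqrt{U}^{-1}$ and $(\sqrt{U^{-1}})^{-1}=\sqrt{U}$, it gives $J_{U^{-1}LA^{-1}L^{*}}=\Id-U^{-1}L\,J^{L^{*}U^{-1}L}_{A}(\sqrt{U}^{-1}L)^{\dagger}\sqrt{U}$, and substituting into~\eqref{e:corstart} together with the identity $\sqrt{U}\,U^{-1}=\sqrt{U}^{-1}$ produces the claimed formula. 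For~\ref{c:paralleliii}, the hypothesis $\ran L^{*}=\HH$ is precisely $\ran M=\HH$, so Theorem~\ref{p:rescalc}\ref{p:rescalc2} (equation~\eqref{e:fineq}) applies and gives $J_{U^{-1}LA^{-1}L^{*}}=\Id-U^{-1}L\,J_{(L^{*}U^{-1}L)^{-1}A}(L^{*}U^{-1}L)^{-1}L^{*}$; inserting this into~\eqref{e:corstart} and cancelling $UU^{-1}$ gives the asserted identity.

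The argument is almost entirely bookkeeping; the only point demanding attention is the substitution $U\leftarrow U^{-1}$, where one must consistently replace $\sqrt{U}$ by $\sqrt{U}^{-1}$ and $\sqrt{U}^{-1}$ by $\sqrt{U}$ in the formulas of Theorem~\ref{p:rescalc}, and then check that the pre- and post-composing factors collapse correctly when~\eqref{e:Moreauresns} is used — in particular $\sqrt{U}\,U^{-1}=\sqrt{U}^{-1}$ for part~\ref{c:parallelii}. I do not anticipate any genuine analytic obstacle, since all the structural work (existence of zeros, single-valuedness of the relevant compositions, the metric maximal monotonicity) is already contained in Theorem~\ref{p:rescalc} and in the closed-range theorem invoked for part~\ref{c:parallelii}.
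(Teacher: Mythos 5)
Your proposal is correct and follows essentially the same route as the paper: apply Theorem~\ref{p:rescalc} to $B=A^{-1}$, $M=L^{*}$ with the metric operator $U^{-1}$, and then pass through the non-standard Moreau identity \eqref{e:Moreauresns}, exactly as the paper does. The only cosmetic difference is in part~\ref{c:paralleliii}, which the paper deduces from part~\ref{c:parallelii} via \eqref{e:warpinvert} and \eqref{e:moinv}, whereas you invoke Theorem~\ref{p:rescalc}\ref{p:rescalc2} directly; the two are equivalent (and your explicit appeal to the closed range theorem in part~\ref{c:parallelii} fills a detail the paper leaves tacit).
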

\begin{proof}
Since $L\rhd A=(LA^{-1}L^*)^{-1}$, the maximal monotonicity of 
$U(L\rhd A)$
follows from 
\cite[Propositions~20.22 \& 20.24]{1}.
\ref{c:paralleli}:
By applying Theorem~\ref{p:rescalc}\ref{p:rescalc1} to 
$B=A^{-1}$ 
and $M=L^*$, it follows from \eqref{e:Moreauresns} that
\begin{align}
J_{U(L\rhd A)}&=\Id-UJ_{U^{-1}LA^{-1}L^*}U^{-1}\nonumber\\
&=\Id-U(\Id-U^{-1}L(A+L^*U^{-1}L)^{-1}L^*)U^{-1}\nonumber\\
&=L(A+L^*U^{-1}L)^{-1}L^*U^{-1}.
\end{align}
\ref{c:parallelii}: By applying 
Theorem~\ref{p:rescalc}\ref{p:warpediii} to $B=A^{-1}$ 
and $M=L^*$, we obtain
\begin{align}
J_{U(L\rhd A)}&=\Id-UJ_{U^{-1}LA^{-1}L^*}U^{-1}\nonumber\\
&=\Id-U\big(\Id-
U^{-1}LJ^{L^*U^{-1}L}_{A}(\sqrt{U}^{-1}L)^{\dagger}\sqrt{U}\big)U^{-1}\nonumber\\
&=LJ^{L^*U^{-1}L}_{A}(\sqrt{U}^{-1}L)^{\dagger}\sqrt{U}^{-1}.
\end{align}
\ref{c:paralleliii}: As in the proof of
Theorem~\ref{p:rescalc}\ref{p:warpediii}, $L^*U^{-1}L$ is strongly 
monotone and, hence, invertible, and the result follows from 
\ref{c:parallelii}, \eqref{e:warpinvert}, and  \eqref{e:moinv}.
\end{proof}

\section{Proximity operator of the infimal 
postcomposition}\label{sec3}
For every 
$f\in\Gamma_0(\HH)$, every linear bounded operator $L\colon 
\HH\to 
\GG$, 
and every strongly monotone self-adjoint linear bounded operator
$U\colon\GG\to\GG$, define 
\begin{equation}
\label{e:proxgen}
\prox_{f,L}^{U}\colon\GG\to 2^{\HH}\colon
u\mapsto\arg\min_{x\in\HH}
\left(f(x)+\frac{1}{2}\|Lx-u\|^2_{U}\right).
\end{equation}
Note that \cite[Theorem~16.3 \& Theorem~16.47(i)]{1} 
yield 
\begin{align}
\label{e:moreoureck}
(\forall u\in\GG)(\forall x\in\HH)\quad  
x\in\prox_{f,L}^Uu\quad&\Leftrightarrow\quad  
	0\in\partial f(x)+L^*U(Lx-u)\nonumber\\
&\Leftrightarrow\quad  
	x\in(\partial f+L^*UL)^{-1}L^*Uu.
\end{align}
When $L=\Id$, we have $\prox_{f,\Id}^{U}=\prox_{f}^{U}$ and it is 
single-valued with full domain. In \cite{Bausch17,Nguy17} (see 
also 
\cite{Vanden20b}) an extension of  
definition of the classical proximity operator is studied by 
considering
a Bregman distance instead of $\|\cdot\|_U^2$,
under the assumption of uniqueness of the solution to the 
optimization 
problem in \eqref{e:proxgen}. In our context, the 
single-valuedness of 
$\prox^{U}_{f,L}$ is not needed.
The following 
result provides some properties of $\prox_{f,L}^{U}$ in more 
general 
contexts.
\begin{proposition}
\label{p:propgenprox}
Let $f\in\Gamma_0(\HH)$, let $L\colon \HH\to \GG$ be a linear 
bounded operator, and let
$U\colon\GG\to\GG$ be a strongly monotone self-adjoint 
linear
bounded operator. Then, the following hold:
\begin{enumerate}[label=(\roman*)]
\item 
\label{p:propgenproxii}
For every $u\in\dom \prox_{f,L}^{U}$, $L(\prox_{f,L}^{U}u)$, and 
$P_{(\ker 
L)^{\bot}}(\prox_{f,L}^{U}u)$ are singletons.
\item
\label{p:propgenproxiii}
Suppose that $\ker L=\{0\}$. Then, for every $u\in\dom 
\prox_{f,L}^{U}$, $\prox_{f,L}^{U}u$ is a singleton.

\item
\label{p:propgenproxw}
Suppose that $\ran L$ is closed. 
Then
\begin{equation}
(\forall u\in\dom 
\prox_{f,L}^{U})\quad 
\prox_{f,L}^{U}u=\prox_f^{L^*UL} 
(\sqrt{U}L)^{\dagger}\sqrt{U}u.
\end{equation}

\item 
\label{p:propgenproxiv}
Suppose that $\ran L^*=\HH$. Then $\prox_{f,L}^{U}$ is 
single-valued,
$\dom \prox_{f,L}^{U}=\GG$, and
\begin{equation}
(\forall u\in\GG)\quad 
\prox_{f,L}^{U}u=\big\{\prox_f^{L^*UL} 
(L^*UL)^{-1}L^*Uu\big\}.
\end{equation}

\end{enumerate}

\end{proposition}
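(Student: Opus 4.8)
The common thread in all four parts is the variational characterization \eqref{e:moreoureck}: $x\in\prox_{f,L}^Uu$ precisely when $-L^*U(Lx-u)\in\partial f(x)$, equivalently $x\in(\partial f+L^*UL)^{-1}L^*Uu$. For \ref{p:propgenproxii} and \ref{p:propgenproxiii}, I would fix $u\in\dom\prox_{f,L}^U$, pick $x,y\in\prox_{f,L}^Uu$, and substitute $-L^*U(Lx-u)\in\partial f(x)$ and $-L^*U(Ly-u)\in\partial f(y)$ into the monotonicity inequality for $\partial f$; this produces $0\le\scal{x-y}{-L^*U(Lx-Ly)}=-\|Lx-Ly\|_U^2$, and since $U$ is strongly monotone, $\|\cdot\|_U$ is a genuine norm, so $Lx=Ly$. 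Hence $L(\prox_{f,L}^Uu)$ is a singleton and $x-y\in\ker L$; as $\ker L$ is closed and $P_{(\ker L)^\perp}$ is linear with kernel $\ker L$, it follows that $P_{(\ker L)^\perp}x=P_{(\ker L)^\perp}y$, giving the second singleton. Part \ref{p:propgenproxiii} is then the special case $\ker L=\{0\}$, for which $P_{(\ker L)^\perp}=\Id$ (equivalently, $L$ is injective and $Lx=Ly$ forces $x=y$).

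For \ref{p:propgenproxw}, I would set $T=\sqrt UL$, so that $L^*UL=T^*T$; since $\sqrt U$ is an isomorphism and $\ran L$ is closed, $\ran T=\sqrt U(\ran L)$ is closed and $T^\dagger$ is well defined. The key point is the Moore--Penrose identity $T^*TT^\dagger=T^*$, which gives $(L^*UL)(\sqrt UL)^\dagger\sqrt Uu=T^*\sqrt Uu=L^*Uu$ for every $u\in\GG$. Interpreting $\prox_f^{L^*UL}$ as the warped proximity operator $(L^*UL+\partial f)^{-1}(L^*UL)$ of \eqref{e:proxwarp}---the correct reading here, since $L^*UL$ need not be strongly monotone when $\ker L\ne\{0\}$---this yields $\prox_f^{L^*UL}(\sqrt UL)^\dagger\sqrt Uu=(L^*UL+\partial f)^{-1}L^*Uu=\prox_{f,L}^Uu$, where the last equality is \eqref{e:moreoureck}.

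For \ref{p:propgenproxiv}, the hypothesis $\ran L^*=\HH$ forces $\ran L$ to be closed (closed range theorem) and $\ker L=\{0\}$, so $L$ is bounded below; arguing exactly as in the proof of Theorem~\ref{p:rescalc}\ref{p:rescalc2}, $L^*UL$ is strongly monotone and hence invertible. Then \eqref{e:proxU} gives $\prox_f^{L^*UL}=J_{(L^*UL)^{-1}\partial f}$, which is single-valued with full domain, and \eqref{e:moinv} applied to $T=\sqrt UL$ (whose $T^*T=L^*UL$ is invertible) gives $(\sqrt UL)^\dagger\sqrt U=(L^*UL)^{-1}L^*\sqrt U\sqrt U=(L^*UL)^{-1}L^*U$. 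Substituting this into the identity of \ref{p:propgenproxw}---which, as the computation there shows, holds for every $u\in\GG$---yields the claimed formula, whose right-hand side is a singleton defined for all $u$; hence $\prox_{f,L}^U$ is single-valued with $\dom\prox_{f,L}^U=\GG$.

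The argument reduces to the single monotonicity inequality above together with elementary Moore--Penrose manipulations, so I do not expect a serious difficulty. The two points that merit care are (a) confirming that $L^*UL$ is invertible in \ref{p:propgenproxiv}, i.e.\ that an injective bounded operator with closed range is bounded below (the same step used for $MUM^*$ in Theorem~\ref{p:rescalc}\ref{p:rescalc2}), and (b) being consistent that $\prox_f^{L^*UL}$ in \ref{p:propgenproxw} denotes the warped proximity operator of \eqref{e:proxwarp}, not the metric proximity operator of \eqref{e:prox}.
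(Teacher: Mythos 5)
Your proof is correct, and for parts \ref{p:propgenproxii}, \ref{p:propgenproxiii}, and \ref{p:propgenproxiv} it follows the paper's argument essentially verbatim: the same monotonicity inequality from \eqref{e:moreoureck} giving $Lx_1=Lx_2$, the same reduction $P_{(\ker L)^\bot}=\Id$ when $\ker L=\{0\}$, and the same invertibility of $L^*UL$ plus \eqref{e:moinv} to pass from \ref{p:propgenproxw} to \ref{p:propgenproxiv}, including the observation (which the paper uses implicitly) that the identity in \ref{p:propgenproxw} holds as a set identity for every $u\in\GG$, so that the full domain and single-valuedness in \ref{p:propgenproxiv} follow from those of the classical non-standard-metric prox \eqref{e:proxU}. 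The one place you diverge is \ref{p:propgenproxw}: the paper reaches $L^*Uu=L^*UL(\sqrt{U}L)^{\dagger}\sqrt{U}u$ via the $U^{-1}$-orthogonal decomposition \eqref{e:orthdecU} and the projection formula \eqref{e:pranU} (with $M=L^*$), i.e.\ by splitting $Uu$ into its components along $\ran(UL)$ and $\ker L^*$, whereas you obtain the same identity directly from the Moore--Penrose relation $T^*TT^{\dagger}=T^*$ with $T=\sqrt{U}L$ (equivalently $TT^{\dagger}=P_{\ran T}$). Your route is shorter and avoids importing the projection computations from the proof of Theorem~\ref{p:rescalc}\ref{p:warpediii}; the paper's route makes the geometric content (the non-standard-metric decomposition of $\GG$) explicit and reuses formulas already established there. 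Your two flagged points of care are exactly right and consistent with the paper: $\prox_f^{L^*UL}$ in \ref{p:propgenproxw} must be read as the warped proximity operator \eqref{e:proxwarp}, since $L^*UL$ need not be strongly monotone when $\ker L\neq\{0\}$, and it only becomes the metric prox in \ref{p:propgenproxiv}.
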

\begin{proof}	
Let $\mu>0$ be the strong monotonictiy parameter of $U$.
\ref{p:propgenproxii}:
Let $x_1$ and $x_2$ in $\prox_{f,L}^Uu$. It follows from 
\eqref{e:moreoureck} applied to $x_1$ and $x_2$,
the monotonicity of $\partial f$, and strong monotonicity of $U$ 
that
\begin{align}
0&\le 
\scal{-L^*UL(x_1-x_2)}{x_1-x_2}\nonumber\\
&=-\scal{UL(x_1-x_2)}{L(x_1-x_2)}\nonumber\\
&\le -\mu\|L(x_1-x_2)\|^2.
\end{align}
Therefore, $L(x_1-x_2)=0$ which leads to $x_1-x_2\in\ker L$
and, hence, $P_{(\ker L)^{\bot}}x_1=P_{(\ker L)^{\bot}}x_2$.

\ref{p:propgenproxiii}: In this case $(\ker L)^{\bot}=\HH$, which 
yields 
$P_{(\ker L)^{\bot}}=\Id$ and the result follows from 
\ref{p:propgenproxii}.

\ref{p:propgenproxw}: 
{Note that the orthogonal decomposition in 
$(\GG,\scal{\cdot}{\cdot}_{U^{-1}})$ in \eqref{e:orthdecU} and 
\eqref{e:pranU} with $M=L^*$ implies that $U=P_{\ran 
(UL)}^{U^{-1}}U+P_{\ker 
L^*}^{U^{-1}}U$. Therefore, it follows from \eqref{e:moreoureck} 
 that}, for every 
$u\in\GG$ and $x\in\HH$,  
\begin{align}
x\in\prox^{U}_{f,L}u\quad 
&\Leftrightarrow\quad x\in(\partial f+L^*UL)^{-1}L^*P_{\ran 
(UL)}^{U^{-1}}Uu\nonumber\\
&\Leftrightarrow\quad x\in(\partial 
f+L^*UL)^{-1}L^*\big(UL(\sqrt{U}L)^{\dagger}\sqrt{U}^{-1}\big)Uu\nonumber\\
&\Leftrightarrow\quad 
x\in\prox_{f}^{L^*UL}\big((\sqrt{U}L)^{\dagger}\sqrt{U}u\big),
\end{align}
where the last equivalence follows from \eqref{e:proxwarp}.

\ref{p:propgenproxiv}: Note that $\ran L^*=\HH$ yields, for every 
$x\in\GG$, 
$\scal{L^*U Lx}{x}\ge \mu\|Lx\|^2\ge\mu\alpha^2\|x\|^2$,
where the existence of $\alpha>0$ is guaranteed by 
\cite[Fact~2.26]{1}. Therefore, $L^*UL$ is strongly monotone and, 
hence, invertible. Hence, the result follows from 
\ref{p:propgenproxw} 
and $(\sqrt{U}L)^{\dagger}=(L^*UL)^{-1}L^*\sqrt{U}$ in view of 
\eqref{e:moinv}.
\end{proof}
The following result provides sufficient conditions ensuring full 
domain
of $\prox_{f,L}^{U}$. This is a 
 a consequence of Theorem~\ref{p:rescalc} 
in the optimization context and we connect the existence result 
with 
the computation of the proximity operators of $f^*\circ L^*$
and $L\rhd f$.
Our result generalizes 
\cite[Proposition~5.2(iii)]{patrinos2020} to non-standard metrics 
and 
infinite dimensions.

\begin{proposition}
\label{p:proxcalc}
Let $\HH$ and $\GG$ be real Hilbert spaces, let
$f\in\Gamma_0(\HH)$, let 
$L\colon\HH\to\GG$ be a linear bounded operator such that 
\begin{equation}
\label{e:condproxgen}
0\in \sri(\dom f^*-\ran L^*),
\end{equation}
 and let $U\colon\GG\to\GG$ be a 
 strongly monotone self-adjoint linear operator.
Then, the following hold:
\begin{enumerate}[label=(\roman*)]
\item 
\label{p:proxcalc0}
$\dom\prox_{f,L}^{U}=\GG$.
\item 
\label{p:proxcalc1}
$\prox^{U^{-1}}_{f^*\circ L^*}=\Id-U 
 L\,\prox_{f,L}^{U}U^{-1}$.
\item  
\label{p:proxcalc2}
$L\rhd f=(f^*\circ L^*)^*\in\Gamma_0(\HH)$ and $\prox_{L\rhd 
f}^{U}=L\,\prox_{f,L}^{U}$.
\end{enumerate}
\end{proposition}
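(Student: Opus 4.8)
The plan is to derive all three statements from Theorem~\ref{p:rescalc} and Corollary~\ref{c:parallel} applied to the subdifferential $A=\partial f$, together with the identity $(\partial f)^{-1}=\partial f^*$ and the conversion between warped resolvents and proximity operators in \eqref{e:proxwarp}. First I would record the key consequence of the qualification condition \eqref{e:condproxgen}: by \cite[Corollary~16.53]{1} (as recalled in the introduction) we have $L\rhd(\partial f)=(L(\partial f^*)L^*)^{-1}=\partial(L\rhd f)$ and $(L\rhd f)^* = f^*\circ L^*$, so in particular $L\rhd f\in\Gamma_0(\GG)$, which gives the first claim in \ref{p:proxcalc2}; I would note here that $0\in\sri(\dom f^*-\ran L^*)$ is exactly the hypothesis of \cite[Corollary~16.53]{1} (equivalently of \cite[Proposition~13.24(iv)]{1}) that guarantees these identities. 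Since $\partial f$ is maximally monotone and, under \eqref{e:condproxgen}, $L(\partial f)^{-1}L^* = L(\partial f^*)L^*$ is maximally monotone (its inverse being the subdifferential of the function $f^*\circ L^*\in\Gamma_0(\GG)$), Corollary~\ref{c:parallel} applies with $A=\partial f$.

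Next, for \ref{p:proxcalc0}, I would combine Corollary~\ref{c:parallel}\ref{c:paralleli}, which gives $J_{U(L\rhd\partial f)}=L(\partial f+L^*U^{-1}L)^{-1}L^*U^{-1}$ with full domain $\GG$ (being a resolvent of a maximally monotone operator in the $U^{-1}$-metric), with the characterization \eqref{e:moreoureck} of $\prox_{f,L}^{V}$. Here a small care is needed with the metric: $\prox_{L\rhd f}^{U}=J_{U^{-1}\partial(L\rhd f)}=J_{U^{-1}(L\rhd\partial f)}$ by \eqref{e:proxU}, so I should apply Corollary~\ref{c:parallel} with $U$ replaced by $U^{-1}$ (which is again strongly monotone, self-adjoint, bounded). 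Doing so yields $\prox_{L\rhd f}^{U}=L(\partial f+L^*UL)^{-1}L^*U$, and comparing with the second line of \eqref{e:moreoureck}, namely $x\in\prox_{f,L}^{U}u\Leftrightarrow x\in(\partial f+L^*UL)^{-1}L^*Uu$, gives precisely $\prox_{L\rhd f}^{U}=L\,\prox_{f,L}^{U}$, completing \ref{p:proxcalc2}; and since $\dom\prox_{L\rhd f}^{U}=\GG$, the set $(\partial f+L^*UL)^{-1}L^*Uu$ is nonempty for every $u$, i.e. $\dom\prox_{f,L}^{U}=\GG$, which is \ref{p:proxcalc0}. (One must also check $\prox_{f,L}^{U}u$ is nonempty as a subset of $\HH$, not just that $L$ of it is nonempty — but nonemptiness of $(\partial f+L^*UL)^{-1}L^*Uu$ is itself nonemptiness of $\prox_{f,L}^{U}u$.)

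For \ref{p:proxcalc1}, I would apply the non-standard Moreau decomposition \eqref{e:Moreau_nonsme} to the function $g:=f^*\circ L^*\in\Gamma_0(\GG)$ with the metric operator $U^{-1}$: this gives $\prox_{g}^{U^{-1}}=\Id-U\,\prox_{g^*}^{U}\,U^{-1}$. Since $g^*=(f^*\circ L^*)^*=L\rhd f$ under \eqref{e:condproxgen}, and $\prox_{L\rhd f}^{U}=L\,\prox_{f,L}^{U}$ by the previous paragraph, substituting yields $\prox_{f^*\circ L^*}^{U^{-1}}=\Id-U\,L\,\prox_{f,L}^{U}\,U^{-1}$, which is exactly \ref{p:proxcalc1}. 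Alternatively, and perhaps cleaner, one can deduce \ref{p:proxcalc1} directly from \eqref{e:Moreauresns} with $A$ replaced by $L\rhd\partial f$ and $U$ replaced by $U^{-1}$, using $(L\rhd\partial f)^{-1}=LA^{-1}L^*=L(\partial f^*)L^*=\partial(f^*\circ L^*)$, together with \ref{c:paralleli}; either route is routine once the $\Gamma_0$ and maximal-monotonicity bookkeeping is in place.

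The main obstacle I anticipate is not any single computation but the careful verification that the qualification condition \eqref{e:condproxgen} is strong enough to license all the identities used: specifically that $L\rhd\partial f=\partial(L\rhd f)$, that $(L\rhd f)^*=f^*\circ L^*$, and that $L(\partial f^*)L^*$ is maximally monotone so that Corollary~\ref{c:parallel} applies. All three follow from \eqref{e:condproxgen} via \cite[Corollary~16.53]{1} and \cite[Proposition~13.24(iv)]{1} (and the identity $\partial(f^*\circ L^*)=(L\rhd\partial f)^{-1}$ already displayed in the introduction), but one should state this explicitly. A secondary point requiring attention is the consistent placement of $U$ versus $U^{-1}$: since $\prox^{U}=J_{U^{-1}\partial(\cdot)}$ by \eqref{e:proxU}, the instance of Corollary~\ref{c:parallel} that one needs is the one with metric operator $U^{-1}$, and likewise the Moreau identity \eqref{e:Moreau_nonsme} must be invoked with $U^{-1}$ in place of $U$; getting this bookkeeping right is what makes the three displayed formulas come out with the stated powers of $U$.
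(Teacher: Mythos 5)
Your overall route is essentially the paper's, run in the opposite order: the paper applies Theorem~\ref{p:rescalc}\ref{p:rescalc1} with $B=\partial f^*$, $M=L^*$ (after using \eqref{e:condproxgen} to get $\partial(f^*\circ L^*)=L(\partial f^*)L^*$ maximally monotone) to obtain \ref{p:proxcalc0} and \ref{p:proxcalc1} directly, and then deduces \ref{p:proxcalc2} from \ref{p:proxcalc1} via the non-standard Moreau identity \eqref{e:Moreau_nonsme}; you instead invoke Corollary~\ref{c:parallel}\ref{c:paralleli} (which is Theorem~\ref{p:rescalc}\ref{p:rescalc1} plus \eqref{e:Moreauresns}) with $U$ replaced by $U^{-1}$ to get \ref{p:proxcalc0} and the identity $\prox^{U}_{L\rhd f}=L\,\prox^{U}_{f,L}$, and then recover \ref{p:proxcalc1} by Moreau. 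This reshuffling is legitimate, and your $U$ versus $U^{-1}$ bookkeeping is correct.

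The one step that needs repair is your justification of the first claim of \ref{p:proxcalc2}. From $(L\rhd f)^*=f^*\circ L^*$ (which is \cite[Proposition~13.24(iv)]{1} and holds \emph{without} any qualification condition) you cannot conclude that $L\rhd f=(f^*\circ L^*)^*\in\Gamma_0(\GG)$: that requires $L\rhd f$ to be proper and lower semicontinuous, i.e.\ to equal its biconjugate, and this is precisely what can fail without \eqref{e:condproxgen} (take $f=0$ and $\ran L$ non-closed, as in the example following the proposition, where $L\rhd f=\iota_{\ran L}$ is not lower semicontinuous). Under \eqref{e:condproxgen} the claim is true, but the correct reference is the Attouch--Br\'ezis-type result \cite[Corollary~15.28]{1}, which is what the paper cites; \cite[Corollary~16.53]{1} only gives the subdifferential identity $\partial(f^*\circ L^*)=L(\partial f^*)L^*$. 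This fact is load-bearing in your argument, since both your proof of $\prox^{U}_{L\rhd f}=L\,\prox^{U}_{f,L}$ (through $\partial(L\rhd f)=L\rhd(\partial f)$ and \eqref{e:proxU}) and your derivation of \ref{p:proxcalc1} (through $(f^*\circ L^*)^*=L\rhd f$) depend on it. Note also that \ref{p:proxcalc0} itself does not need $L\rhd f\in\Gamma_0(\GG)$ at all: nonemptiness of $(\partial f+L^*UL)^{-1}L^*Uu$, hence of $\prox^{U}_{f,L}u$ via \eqref{e:moreoureck}, already follows from the maximal monotonicity of $L(\partial f^*)L^*$ together with Theorem~\ref{p:rescalc}\ref{p:rescalc1} or Corollary~\ref{c:parallel}\ref{c:paralleli}. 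With the citation corrected, your argument goes through.
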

\begin{proof}
\ref{p:proxcalc0}: 
Since $0\in 
\sri(\dom f^*-\ran L^*)$, \cite[Corollary~16.53(i)]{1} yields $\partial 
(f^*\circ L^*)=L(\partial f^*) L^*$, which is maximally 
monotone in $\HH$ because $f^*\circ L^*\in\Gamma_0(\HH)$ 
\cite[Theorem~20.25]{1}. Hence, by applying
Theorem~\ref{p:rescalc}\ref{p:rescalc1} to $B=\partial 
f^*$ and $M=L^*$, it follows from \eqref{e:moreoureck} that
\begin{align}
\label{e:SyOmega}
(\forall x\in\HH)\quad 
\varnothing\neq((\partial f^*)^{-1}+L^* U L)^{-1} 
L^*Ux
&=(\partial f+L^* U L)^{-1}L^*Ux\nonumber\\
&=\prox^U_{f,L}x.
\end{align}

\ref{p:proxcalc1}: We deduce from \eqref{e:proxU},
Theorem~\ref{p:rescalc}\ref{p:rescalc1}, and 
\eqref{e:SyOmega} 
that
$\prox_{f^*\circ 
L^*}^{U^{-1}}=J_{U\partial (f^*\circ L^*)}=J_{UL(\partial f^*)L^*}
=\Id-U L((\partial 
f^*)^{-1}+L^* U L)^{-1}
L^*=\Id-U L\,\prox^U_{f,L}U^{-1}$.

\ref{p:proxcalc2}: Since $f^*\circ L^*\in\Gamma_0(\HH)$,
\eqref{e:condproxgen} and \cite[Corollary~15.28]{1}  yield
 $L\rhd f=(f^*\circ L^*)^*\in\Gamma_0(\HH)$. Hence, it follows 
from 
\eqref{e:Moreau_nonsme} and \ref{p:proxcalc1} that
\begin{align}
\prox^{U}_{L\rhd f}
&=\Id-U^{-1}\,\prox^{U^{-1}}_{f^*\circ 
L^*}\, U \nonumber\\
&=\Id-U^{-1}\,(\Id-U 
 L\,\prox_{f,L}^{U}U^{-1})\,U\\
&=L\,\prox_{f,L}^{U}
\end{align}
and the proof is complete.
\end{proof}

{Without the qualification condition \eqref{e:condproxgen}, 
$\prox_{f,L}^Uu$ may be empty for some $u\in\GG$, as the 
following 
examples illustrate.
\begin{example}
Suppose that $U=\Id$, that $\ran 
L$ is not closed,
set $f=0$, and let $u\in\overline{\ran L}\setminus \ran L$. Then,
$\inf_{x\in\HH}\|Lx-u\|=0$ but the infimum is not attained.
Observe that, since $f^*=\iota_{\{0\}}$,  we have $\dom 
f^*=\{0\}$ which yields  $\cone(\dom f^*-\ran L^*)=\cone (\ran 
L^*)=\ran L^*\neq  \overline{\ran L^*}=\spc\ran L^*$ and, thus, 
$0\notin\sri (\dom f^*-\ran 
L^*)$.
\end{example}
\begin{example}
Suppose that $\HH=\RR^2$, $\GG=\RR$, $f\colon (x,y)\mapsto 
\exp(y)$, and $L\colon (x,y)\mapsto x$. Then $L^*\colon 
z\mapsto 
(z,0)$, $\ran L^*=\RR\times\{0\}$, and $f^*\colon (u,v)\mapsto 
\iota_{\{0\}}(u)+\exp^*(v)$, where
\begin{equation}
\exp^*\colon v\mapsto 
\begin{cases}
v(\ln v-1),&\text{if}\:\:v>0;\\
0,&\text{if}\:\:v=0;\\
\pinf,&\text{if}\:\:v<0.
\end{cases}
\end{equation}
Then, $\dom f^*=\{0\}\times \RP$ and $\cone(\dom f^*-\ran 
L^*)=\RR\times\RP\ne\RR^2=\spc(\dom f^*-\ran 
L^*)$, which yields $0\notin\sri (\dom f^*-\ran 
L^*)$.
\end{example}

}
\begin{remark}
\begin{enumerate}[label=(\roman*)]
\item In the case when $U=\mu\Id$, the existence of solutions to 
\eqref{e:proxgen} is assumed in 
\cite[Proposition~5.2(iii)]{patrinos2020} and its 
uniqueness is supposed in 
\cite[Theorem~4.7]{condat2020proximal}.
On the other hand,
the strong monotonicity 
of $(L^*L+\partial f)$ is assumed in \cite{BrediesSun17,Cote17} in 
order 
to guarantee the existence and uniqueness of solutions to the 
optimization problem in \eqref{e:proxgen}. {Under previous 
assumptions, the sequences of ADMM are proved to be well 
defined.}
Proposition~\ref{p:proxcalc}\ref{p:proxcalc0} {provides the 
existence of solutions to \eqref{e:proxgen} under the weaker 
condition 
\eqref{e:condproxgen}.
It is deduced }
from Theorem~\ref{p:proxcalc} and the maximal monotonicity of 
$L(\partial f^*)L^*$, which is obtained from the qualification 
condition 
$0\in \sri(\dom f^*-\ran L^*)$ and $f^*\circ 
L^*\in\Gamma_0(\HH)$ 
in view of \cite[Corollary~16.53(i) \& Theorem~20.25]{1}. 
{Moreover, it follows from 
Proposition~\ref{p:proxcalc}\ref{p:proxcalc2} that
$L\prox^U_{f,L}$ is single-valued, which implies that the 
sequences 
generated by ADMM are well defined. 
In summary,
under the weaker condition $0\in \sri(\dom f^*-\ran L^*)$ we 
guarantee 
the existence and uniqueness of the sequences generated by 
ADMM  
under our approach, generalizing 
\cite{BrediesSun17,condat2020proximal,Cote17,patrinos2020}. A 
general convergence result of ADMM in this context is provided in 
\cite[Theorem~4.6]{Siopt3}.
}

\item In \cite{fadili09,Micc11} fixed point approaches are used in 
order 
to 
compute $\prox_{f\circ L}$ in the context of the sparse recovery 
in 
image processing. This approach leads to sub-iterations in 
optimization algorithms needing to compute $\prox_{f\circ L}$.
{From Proposition~\ref{p:proxcalc}\ref{p:proxcalc1} in the case 
when $U=\Id$, 
our computation is direct once $\prox_{f^*, L^*}$ is easily 
computable.
This is the case, for instance, when $\HH=\RR^n$, $L$ is a 
$n\times 
m$ real matrix with $m>n$, and $f\colon 
x\mapsto x^{\top}Ax/2-z^{\top}x$,
where $A$ is an symmetric positive definite $n\times n$ real 
matrix and $z\in\RR^n$. 
This setting appears, e.g., in signal and image processing 
\cite{CombW05,ista,figueiredo} and 
statistics \cite{lasso1,FusedLASSO,GenLASSO}. In this case, 
the computation of $\prox_{f^*, L^{\top}}$ needs the inversion of 
the 
$n\times n$ real matrix $A^{-1}+LL^{\top}$, while $\prox_{f\circ 
L}$ 
need 
the 
inversion of the $m\times m$ real matrix
$\Id+L^{\top}AL$, which can be more expensive numerically 
when 
$n<<m$. }

\item 
Note that Proposition~\ref{p:proxcalc}\ref{p:proxcalc0} yields
\begin{equation}
\label{e:genMoreau}
\prox^{U^{-1}}_{f^*\circ L^*}+U 
 L\,\prox_{f,L}^{U}U^{-1}=\Id.
\end{equation}
In the case when $L=\Id$, since $\prox_{f,\Id}^{U}=\prox_{f}^{U}$,
\eqref{e:genMoreau} reduces to
\cite[Proposition~24.24(ii)]{1}, which is a non-standard metric 
version 
of Moreau's 
decomposition \cite{Moreau_prox} first derived for mutually polar 
cones \cite{Moreau_cone}.

\end{enumerate}
\end{remark}

\section{Acknowledgements}

The first author thanks the support of ANID under grants 
FON\-DECYT 1190871, Centro de
Modelamiento Matem\'atico (CMM) FB210005
BASAL funds for centers of excellence,  
and grant Redes 180032. The second author thanks the support 
of ANID-Subdirección de Capital  Humano/Doctorado 
Nacional/2018-21181024 and of the Direcci\'on
de Postgrado y Programas from UTFSM through Programa de 
Incentivos a la Iniciaci\'on
Cient\'ifica (PIIC).

%
 \section*{Conflict of interest}
The authors have no conflicts of interest to declare that are 
relevant to 
the content of this article.
%


%
%


\end{document}